\newtheorem{theorem}{Theorem}[section]
\newtheorem{maintheorem}[theorem]{Main Theorem}
\newtheorem{proposition}[theorem]{Proposition}
\newtheorem{lemma}[theorem]{Lemma}
\newtheorem{corollary}[theorem]{Corollary}
\theoremstyle{definition}
\newtheorem{conjecture}[theorem]{Conjecture}
\newenvironment{proof31}{\paragraph{\emph{Proof of Proposition 3.1}}}{\hfill$\square$}
\newcommand{\br}{\rm{br}}
\newcommand{\lk}{\rm{lk}}
\newcommand{\stellar}{\rm{stellar}}
\begin{document}
\title{Stellar theory for flag complexes}

\noindent
\date{}

\author{Frank H.~Lutz}
\address{Institut f\"ur Mathematik,
Technische Universit\"at Berlin,
Stra{\ss}e des 17.~Juni~136,
10623 Berlin, Germany
}
\email{lutz@math.tu-berlin.de}

\thanks{Research of the first author was supported by the DFG Research Group ``Polyhedral Surfaces'', 
by \textsc{VILLUM FONDEN} through the Experimental Mathematics Network and by the Danish National Research Foundation (DNRF) through the Centre for Symmetry and Deformation.}

\keywords{Flag simplicial complex, stellar subdivision, edge subdivision, PL sphere, gamma-vector}

\author{Eran Nevo}
\address{
Department of Mathematics,
Ben Gurion University of the Negev,
Be'er Sheva 84105, Israel
}
\email{nevoe@math.bgu.ac.il}

\thanks{Research of the second author was partially supported by Marie Curie grant IRG-270923 and ISF grant.}

\maketitle
\begin{abstract}
Refining a basic result of Alexander, we show that two \emph{flag} simplicial complexes are piecewise linearly homeomorphic if and only if they can be connected by a sequence of flag complexes, each obtained from the previous one by either an edge subdivision or its inverse.
For flag spheres we pose new conjectures on their combinatorial structure forced by their face numbers, analogous to the extremal examples in the upper and lower bound theorems for simplicial spheres. Furthermore, we show that our algorithm to test the conjectures searches through the entire space of flag PL spheres of any given dimension.
\end{abstract}

\section{Introduction}\label{sec:intro}
A basic result in piecewise linear (PL) topology, is that
\begin{theorem}\label{thm:stellar}(Alexander, \cite[Theorem 15:1]{Alexander1930})
Two simplicial complexes are PL homeomorphic if and only if they can be connected by a sequence of stellar subdivisions and their inverses.
\end{theorem}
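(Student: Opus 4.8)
This is Alexander's classical stellar-subdivision theorem, and I would reconstruct the standard proof from piecewise linear topology; I record the plan here since its architecture will guide the flag analogue. Write $K\sim L$ when $K$ and $L$ are joined by a finite sequence of stellar subdivisions and inverse stellar subdivisions: this is an equivalence relation, visibly preserved by simplicial isomorphism. The ``if'' direction is immediate: a stellar subdivision $K\rightsquigarrow K^{\ast}$ leaves the underlying polyhedron unchanged, $|K^{\ast}|=|K|$, with the identity a PL homeomorphism, and likewise for an inverse stellar subdivision; composing along a chain realizing $K\sim L$ produces a PL homeomorphism $|K|\to|L|$.

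For the converse, suppose $f\colon|K|\to|L|$ is a PL homeomorphism. I would invoke the standard fact that, after subdividing source and target, $f$ becomes simplicial: there are subdivisions $K_{1}$ of $K$ and $L_{1}$ of $L$ for which $f$ restricts to a simplicial isomorphism $K_{1}\to L_{1}$. The theorem then reduces to the single claim that every simplicial complex is $\sim$-equivalent to each of its subdivisions --- call this $(\ast)$ --- for then $K\sim K_{1}\cong L_{1}\sim L$.

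To prove $(\ast)$ I would combine two ingredients. First, the barycentric subdivision $\mathrm{sd}(K)$ is reachable from $K$ by stellar subdivisions: star the simplices of $K$ at their barycenters in order of decreasing dimension, and verify by induction on this process that the result is exactly $\mathrm{sd}(K)$; iterating, $K\sim\mathrm{sd}^{(r)}(K)$ for every $r\geq 1$. Second, a common-stellar-refinement lemma: a complex $K$ and an arbitrary subdivision $K'$ of it possess a common subdivision $N$ that is reached from each of $K$ and $K'$ by stellar subdivisions. Granting this, $K\sim N$ and $K'\sim N$, hence $K\sim K'$, which is $(\ast)$.

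The heart of the matter, and where I expect essentially all of the work to lie, is the common-stellar-refinement lemma. I would prove it by induction on $\dim K$, constructing the common refinement over the skeleta of $K$ and, at the top step, extending it one top-dimensional simplex $\sigma$ at a time. The decisive point is local and about balls: given $\sigma$ with its cone structure and the induced subdivision $K'|_{\sigma}$ --- which by the inductive hypothesis on $\partial\sigma$ already carries a common stellar refinement on the boundary --- one must produce a subdivision of $\sigma$, fixed on $\partial\sigma$, that refines both the undivided simplex $\sigma$ and $K'|_{\sigma}$ and is reachable from each by stellar subdivisions; in practice one may first pass to a sufficiently high iterated derived subdivision (legitimate by the first ingredient) and then star compatibly with the prescribed boundary refinement. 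Making this rel-boundary coning argument precise, and checking that the stellar subdivisions chosen on the individual simplices assemble into a single global sequence on $K$, is the main obstacle.
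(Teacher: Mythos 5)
The paper does not actually prove Theorem~\ref{thm:stellar}; it is quoted from Alexander with a pointer to Lickorish's survey, so your proposal has to be measured against the standard literature argument. Up to a point your architecture matches it: the ``if'' direction, the passage to a simplicial isomorphism $K_1\to L_1$ after subdividing source and target, and the reduction to the single claim $(\ast)$ that a complex is stellar-equivalent (zigzag allowed) to each of its subdivisions are all exactly the classical reduction, and your first ingredient (reaching $\mathrm{sd}(K)$ by starring barycenters in order of decreasing dimension) is correct and is recalled in Section~\ref{sec:prelim}.

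The gap is in your proof of $(\ast)$. The ``common-stellar-refinement lemma'' you rest everything on --- that $K$ and an arbitrary subdivision $K'$ of $K$ admit a common refinement $N$ reachable from \emph{each} of them by stellar subdivisions alone --- is not an available tool: it is essentially Alexander's own unsolved conjecture in its strong-factorization form, the very circle of questions this paper flags as open in the discussion around Problem~\ref{prob:edgeAlexanderConj} (citing Hudson's problem list and the work of Da Silva and Karu on the strong Oda conjecture). What Alexander proved, and what the modern proofs reprove, is only the zigzag statement; the known arguments for $(\ast)$ are genuinely zigzag ones, inducting on the number of simplices and using the fact that stellar equivalence is preserved under joins and cones (if $A$ is stellar-equivalent to $B$ then $a\ast A$ is stellar-equivalent to $b\ast B$), with inverse stellar moves interleaved in an essential way rather than postponed to the end. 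Your suggested repair --- pass to a high iterated derived subdivision and then ``star compatibly'' with the prescribed boundary refinement --- does not close the gap: iterated barycentric subdivisions of a simplex $\sigma$ do not refine a generic subdivision $K'|_{\sigma}$ no matter how many times you iterate, and carrying out the rel-boundary starring compatibly with $K'|_{\sigma}$ is precisely the unsolved difficulty. So the proposal is sound up to the reduction to $(\ast)$, but the engine you propose for $(\ast)$ is an open problem, not a lemma; to repair it you should replace the common-refinement step by the standard join/cone induction that produces a zigzag.
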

See e.g. \cite[Theorem 4.5]{Lickorish:PLsurvey} for a modern proof and further references.

An (abstract) simplicial complex is called \emph{flag} if all its minimal non-faces (called also \emph{missing faces}) have cardinality two; equivalently, it is the complex of cliques of a simple graph.
Flag complexes arise in many mathematical contexts, and often interesting families of flag complexes share the same PL type; for example, the order complexes of intervals with respect to Bruhat order on Coxeter groups are PL spheres \cite{Bjorner84:PosetsRegularCWBruhatOrder}.
Very recently Adiprasito and Benedetti showed that the Hirsch conjecture, on the diameter of the facet-ridge graph, holds for all (connected) flag homology manifolds \cite{Adiprasito-Bennedetti:Hirsch}.

Our main result says that:
\begin{maintheorem} \label{thm:main}
Two flag simplicial complexes are PL homeomorphic if and only if they can be connected by a sequence of edge subdivisions and their inverses such that all the complexes in the sequence are flag.
\end{maintheorem}

\pagebreak

Equivalently, in graph language, this theorem reads as:
\begin{corollary}
The clique complexes of two graphs $G$ and $G'$ are PL homeomorphic if and only if there is a sequence of graphs $G=G_0,G_1,\dots,G_t=G'$ such that for any $1\leq i\leq t$, one of $G_{i-1},G_i$ is obtained from the other by placing a new vertex $v$ at the middle of an edge $\{a,b\}$ (breaking it into two edges) and connecting $v$ to all common neighbors of $a$ and $b$.
\end{corollary}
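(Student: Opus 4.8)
The plan is to observe that the corollary is a faithful translation of the Main Theorem under the standard bijection between flag complexes and graphs. Recall that a simplicial complex is flag precisely when it equals the clique complex $\mathrm{Cl}(G)$ of its own $1$-skeleton $G$; thus $\Delta\mapsto\mathrm{skel}_1(\Delta)$ and $G\mapsto\mathrm{Cl}(G)$ are mutually inverse correspondences between flag complexes and simple graphs, and they do not change the underlying topological space, so $\mathrm{Cl}(G)$ and $\mathrm{Cl}(G')$ are PL-homeomorphic exactly when the flag complexes $\mathrm{Cl}(G)$, $\mathrm{Cl}(G')$ are. In view of the Main Theorem it therefore suffices to check that, at the level of $1$-skeleta, an edge subdivision of a flag complex is exactly the graph move of the statement, and that its inverse corresponds to the inverse move.

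So fix $\Delta=\mathrm{Cl}(G)$ flag, an edge $e=\{a,b\}\in\Delta$, and let $\Delta'$ be the edge subdivision of $\Delta$ at $e$: delete from $\Delta$ all faces containing $e$ and adjoin $v*\partial e*\lk_\Delta(e)$, where $v$ is the new vertex. Because $\Delta$ is flag, $w\in\lk_\Delta(e)$ iff $\{a,b,w\}$ is a clique of $G$, i.e.\ iff $w$ is a common neighbor of $a$ and $b$. Reading off $1$-skeleta: $\{a,b\}$ is no longer an edge, and $v$ is joined exactly to $a$, to $b$, and to the common neighbors of $a$ and $b$; so $\mathrm{skel}_1(\Delta')$ is obtained from $G$ by precisely the move described, and conversely that move applied to $G$ at $\{a,b\}$ yields a graph whose $1$-skeleton agrees with $\mathrm{skel}_1(\Delta')$.

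What still has to be verified — and this is the only step with genuine content beyond quoting the Main Theorem — is that $\Delta'$ is again flag, equivalently $\mathrm{Cl}(\mathrm{skel}_1(\Delta'))=\Delta'$, so that the dictionary of the first paragraph applies to $\Delta'$ too. For this one takes a clique $C$ of $G':=\mathrm{skel}_1(\Delta')$ and shows $C\in\Delta'$. If $v\notin C$, then (since $\{a,b\}\notin G'$) $C$ is a clique of $G$ not containing $e$, hence a face of $\Delta$ not containing $e$, hence a face of $\Delta'$. If $v\in C$, then $C\setminus v$ lies in $\{a,b\}\cup\lk_\Delta(e)$ and contains at most one of $a,b$; writing $C\setminus v=F\cup C''$ with $F\in\partial e$ and $C''\subseteq\lk_\Delta(e)$, flagness of $\Delta$ gives $C''\cup e\in\Delta$, so $C''\in\lk_\Delta(e)$ and $C=\{v\}\cup F\cup C''$ is a face of $v*\partial e*\lk_\Delta(e)\subseteq\Delta'$. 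The reverse inclusion is trivial. With this in hand, a sequence of flag complexes linked by edge subdivisions and their inverses transports, via $\mathrm{skel}_1$, to a sequence of graphs linked by the stated move and its inverse, and back; so the corollary follows at once from the Main Theorem. I expect the case analysis just sketched, together with the routine symmetric bookkeeping for the inverse direction, to be the main (and fairly minor) obstacle.
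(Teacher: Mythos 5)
Your proposal is correct and follows exactly the route the paper intends: the corollary is stated as a direct graph-language translation of the Main Theorem, and your dictionary between flag complexes and their $1$-skeleta, together with the check that an edge subdivision of a flag complex is again flag (which the paper records separately as Lemma \ref{lem:edge-sd(flag)}), is precisely the verification being left to the reader. Nothing is missing.
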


Along the way, in Proposition~\ref{prop:br}, we will show that one can connect any simplicial complex to its barycentric subdivision by a sequence of edge subdivisions (no inverse moves are needed). We use this result to rediscover Alexander's result \cite[Corollary 10:2d]{Alexander1930} that in Theorem \ref{thm:stellar} subdivisions at edges suffice; see Corollary~\ref{cor:PLedge-sd}.

We explain now an aspect in which our proof is advantageous. In view of Corollary~\ref{cor:PLedge-sd}, one may strengthen Alexander's conjecture that in Theorem~\ref{thm:stellar} one can perform all stellar subdivisions before all the inverse stellar subdivisions (see e.g. \cite[p.~14, unsolved problem]{Hudson}) as follows:
\begin{conjecture}\label{conj:edgeAlexanderConj}
Two simplicial complexes $\Delta$ and $\Delta'$ are PL homeomorphic
if and only if  they have a common refinement by a sequence of edge subdivisions from each of them.
\end{conjecture}

Our proof of Corollary~\ref{cor:PLedge-sd} shows that Conjecture~\ref{conj:edgeAlexanderConj} is true if $\Delta'$ is obtained from~$\Delta$ by some stellar subdivision (while Alexander's proof connects them by a ``zigzag" sequence).
For further development on Conjecture~\ref{conj:edgeAlexanderConj} and its connection to the strong Oda conjecture see \cite{DaSilva-Karu} and the references therein.

We summarize Alexander's results and our main theorem in the language of graph theory.
Let $\Delta$ be a simplicial complex, and define an (infinite) graph $G_s(\Delta)=(V,E)$ as follows.
Let~$V$ be the set of simplicial complexes PL homeomorphic to $\Delta$, and $\{\Delta',\Delta''\}\in E$ if and only if one of the complexes $\Delta'$ and $\Delta''$ is obtained from the other by a stellar subdivision,
say at a face $F$.
Let $G_e(\Delta)$ be the graph obtained from $G_s(\Delta)$ by  deleting the edges for which $1<{\rm dim}\, F:=|F|-1$.
Let $G_f(\Delta)$ be the graph induced from $G_e(\Delta)$ by restricting to the vertices corresponding to flag complexes.
Then $G_f(\Delta)\subseteq G_e(\Delta)\subseteq G_s(\Delta)$ satisfy:
\begin{itemize}
\item{}$G_s(\Delta)$ is connected (Alexander \cite{Alexander1930}).
\item{}$G_e(\Delta)$ is connected (Alexander \cite{Alexander1930}).
\item{}$G_f(\Delta)$ is connected (Theorem \ref{thm:main}).
\end{itemize}

Next, we consider flag spheres, and pose two new conjectures about the combinatorial structure forced by their face numbers, analogous to the extremal examples in the upper and lower bound theorems for simplicial spheres. The conjectures are supported by computer experiments --- as a consequence of the Main Theorem~\ref{thm:main} our algorithm searches through the entire space of flag PL spheres of any fixed dimension; see Corollary \ref{cor:cumputer_program}.

Section~\ref{sec:prelim} provides preliminaries on stellar theory. Barycentric subdivisions are discussed in Section~\ref{sec:br},
concluding that $G_e(\Delta)$ is connected in Section~\ref{sec:stellar}.
Section~\ref{sec:flag} gives the proof that $G_f(\Delta)$ is connected, and conjectures for extremal flag spheres are formulated in Section~\ref{sec:apps}.

\section{Preliminaries}\label{sec:prelim}
A (finite) \emph{abstract simplicial complex} on a (finite) set of vertices $V$
is a system $\Delta\subseteq 2^V$ of subsets of $V$ such that
for every $F\in\Delta$ and $F'\subseteq F$ also $F'\in\Delta$.
An element $F\in\Delta$ is called a \emph{face} of $\Delta$, an inclusion maximal face
is a \emph{facet},
and we use set operations $F\cup F'$, $F\uplus F'$, $F\cap F'$, $F\backslash F'$, and $|F|$
to denote unions, disjoint unions, intersections, differences, and cardinalities of faces, respectively.

For a simplicial complex $\Delta$ and a face $F$ in it, let the \emph{stellar subdivision} of $\Delta$ at~$F$ be
$$\stellar_{\Delta}(F):=\{F' \in \Delta: F\cap F'\neq \emptyset\} \cup (\{v_F\} \ast \partial F \ast \lk_{\Delta}(F)).$$
Here, $\lk$ denotes the \emph{link} of a face,
$$\lk_{\Delta}(F)=\{F'\in \Delta: F\cap F'=\emptyset, F'\cup F\in \Delta\},$$
$*$ the \emph{join product} of two simplicial complexes with disjoint vertex sets,
$$\Delta*\Delta'=\{F\cup F':F\in\Delta,F'\in\Delta'\},$$
and $\partial$ the \emph{boundary complex} of a face,
$$\partial F =\{F': F'\subseteq F, F'\neq F\},$$
and $v_F$ is a vertex not in~$\Delta$.

Consider a \emph{geometric realization} $||\Delta||$ of $\Delta$, that is, a geometric simplicial complex $||\Delta||$ isomorphic to $\Delta$
in some ${\mathbb R}^n$.
Geometrically, placing the new vertex $v_F$ anywhere in the relative interior of $||F||$ and
taking convex hulls of $v_F$ with the faces of $\partial F$ and the simplices in $\lk_{\Delta}(F)$ yields the same embedded space for the geometric realization $||\stellar_{\Delta}(F)||$ as $||\Delta||$.

Let $\br(\Delta)$ denote the \emph{barycentric subdivision} of $\Delta$, namely the simplicial complex whose vertices are
indexed by the nonempty faces of $\Delta$ and whose simplices correspond to a set of faces forming a chain with respect to inclusion.
To get the same embedded space for the geometric realizations of $||\Delta||$ and
$||\br(\Delta)||$, for each nonempty face $F\in \Delta$ place $v_F$ at the barycenter of $||F||$ in the embedding induced by $||\Delta||$.
It is known that totally ordering the faces of  $\Delta$ by decreasing dimension and performing stellar subdivions according to this order changes $\Delta$ to $\br(\Delta)$.

\section{Barycentric subdivision: edges suffice}\label{sec:br}


\begin{figure}
    \begin{center}
      \begin{postscript}
      \psfrag{v0}{$v_0$}
      \psfrag{v1}{$v_1$}
      \psfrag{v2}{$v_2$}
      \psfrag{v}{$v_3$}
      \psfrag{u0}{$u_0$}
      \psfrag{u1}{$u_{01}$}
      \psfrag{u2}{$u_{012}$}
      \psfrag{u}{$u_{013}$}
      \psfrag{u'}{$u_{02}$}
      \psfrag{u''}{$u_{03}$}
      \psfrag{u'''}{$u_{12}$}
      \psfrag{u''''}{$u_{13}$}
      \psfrag{v0v1}{$v_0v_1$}
      \psfrag{v0v2}{$v_0v_2$}
      \psfrag{v0v}{$v_0v_3$}
      \psfrag{v1v2}{$v_1v_2$}
      \psfrag{v1v}{$v_1v_3$}
      \psfrag{v0v1v2}{$v_0v_1v_2$}
      \psfrag{v0v1v}{$v_0v_1v_3$}
      \psfrag{e}{$\emptyset$}
      \includegraphics[width=.425\linewidth]{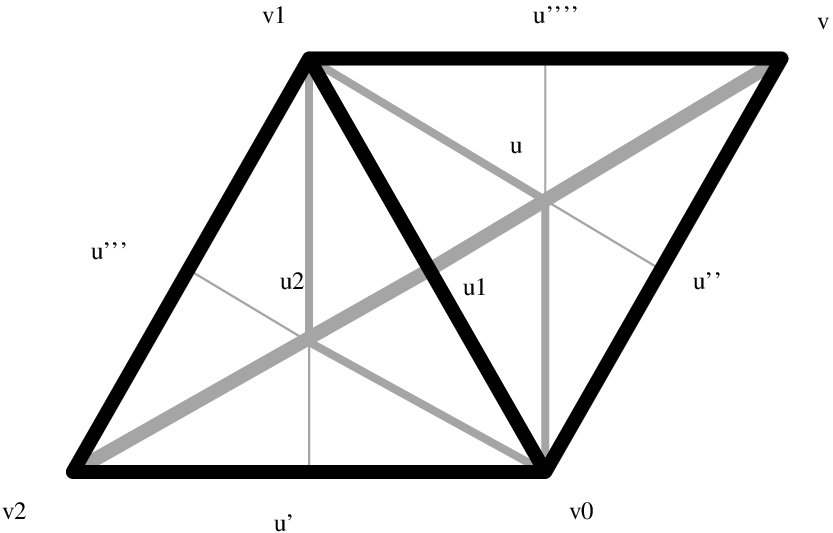}\hspace{.05\linewidth}\includegraphics[width=.425\linewidth]{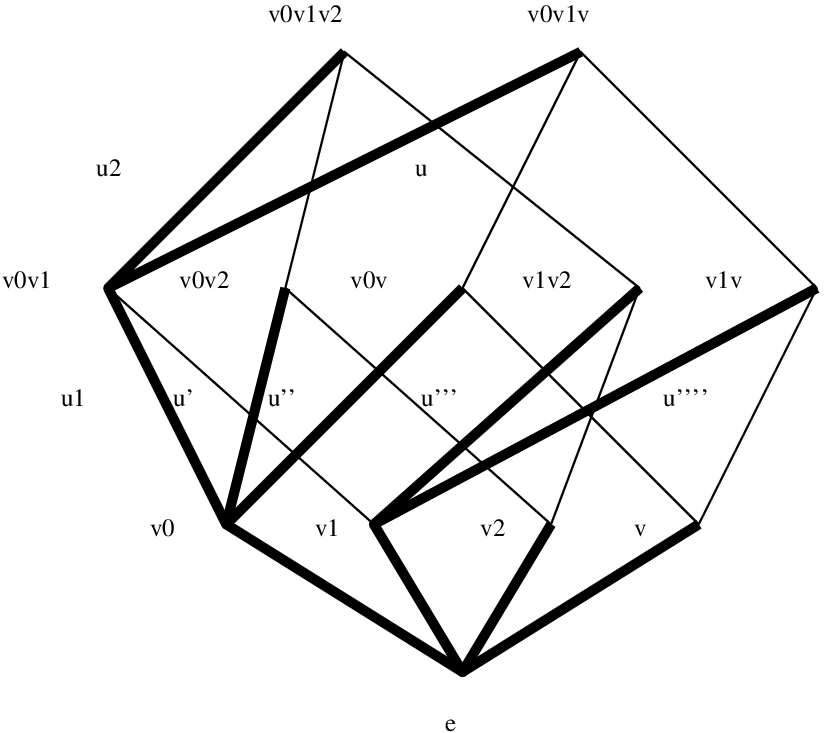}
      \end{postscript}
    \end{center}
    \caption{Iterated edge subdivisions for two triangles  $v_0v_1v_2$ and $v_0v_1v_3$ according to a spanning tree in the Hasse diagram
                   of the two triangles.
                   The new vertices are inserted in the lexicographic order $u_{01},u_{012},u_{013},u_{02},u_{03},u_{12},u_{13}$.}
      \label{fig:backtrack-sd}
  \end{figure}


\begin{proposition}\label{prop:br}
Let $\Delta$ be a simplicial complex, and $\br(\Delta)$ denote its barycentric subdivision. Then there is a sequence of edge subdivisions from $\Delta$ to $\br(\Delta)$.
\end{proposition}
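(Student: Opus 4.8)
The plan is to induct on $\dim\Delta$ and to reduce the whole statement to the case where $\Delta=\bar\sigma$ is a single closed simplex. Fix once and for all a linear order $a_1<a_2<\dots<a_n$ on $V(\Delta)$. To each face $F$ of $\Delta$ with $|F|\ge 2$ associate the edge subdivision at the edge $\{\,v_{F\setminus\{\max F\}},\ \max F\,\}$, where $v_{\{a\}}:=a$; this move is designed to introduce the vertex $v_F$ of $\br(\Delta)$, and its geometric effect is to place $v_F$ at the barycenter of $\|F\|$. Perform these moves in order of increasing $\max F$, ties broken by decreasing $|F|$. Two things then need checking: that each prescribed move really is an edge subdivision of the current complex, and that the end result is $\br(\Delta)$. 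Because this order, restricted to the faces contained in any fixed closed facet $\bar\sigma$ of $\Delta$, is exactly the scheme described below for $\bar\sigma$ alone, and because moves attached to faces not contained in $\sigma$ leave the part of the current complex sitting inside $\bar\sigma$ untouched, both points reduce to the case $\Delta=\bar\sigma$: then the final complex restricted to each $\bar\sigma$ is $\br(\bar\sigma)$, hence the whole final complex is $\bigcup_\sigma\br(\bar\sigma)=\br(\Delta)$.

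So let $\sigma=\{a_1<\dots<a_n\}$ and induct on $n$, the cases $n\le 2$ being trivial. Put $\tau=\sigma\setminus\{a_n\}$. By the inductive hypothesis there is a sequence of edge subdivisions turning $\bar\tau$ into $\br(\bar\tau)$; since $\bar\sigma=a_n*\bar\tau$ and stellar subdivision at a face of $\bar\tau$ commutes with joining on the vertex $a_n$, performing the same moves inside $\bar\sigma$ turns it into the cone $a_n*\br(\bar\tau)$. Now process the nonempty faces $T$ of $\bar\tau$ in order of decreasing cardinality, and for each one subdivide the edge $\{v_T,a_n\}$, introducing the new vertex $v_{T\cup\{a_n\}}$ (note $\max(T\cup\{a_n\})=a_n$, so this is the prescribed move). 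The edge $\{v_T,a_n\}$ is available at the moment it is needed: it is present in the cone $a_n*\br(\bar\tau)$, and the only move among all these that subdivides $\{v_T,a_n\}$ is the one attached to $T$ itself, which comes no earlier. After the coning phase the vertex set is precisely $\{v_F:\emptyset\neq F\subseteq\sigma\}$, the barycenters of all faces of $\sigma$, and since every step is realized as an honest subdivision of the fixed space $\|\sigma\|$, the result is a simplicial subdivision of $\|\sigma\|$ with that vertex set.

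The main obstacle is to see that this subdivision is combinatorially $\br(\bar\sigma)$ — equivalently, that the coning moves perform exactly the replacements that upgrade $a_n*\br(\bar\tau)$ to $\br(\bar\sigma)$ and nothing more. The natural way to handle this is to strengthen the inductive hypothesis to an exact description of the intermediate complexes: after the moves attached to the faces $T\subseteq\tau$ with $|T|\ge k$ have been carried out, the complex is a prescribed ``hybrid'' between $a_n*\br(\bar\tau)$ and $\br(\bar\sigma)$, in which $v_{T\cup\{a_n\}}$ has been inserted exactly for $|T|\ge k$. The content of that claim is that, when $\{v_T,a_n\}$ is about to be subdivided, its link in the current complex already equals the link that $v_{T\cup\{a_n\}}$ has in $\br(\bar\sigma)$ (restricted to the vertices created so far), so that the stellar move at $\{v_T,a_n\}$ installs precisely the star of $v_{T\cup\{a_n\}}$ in $\br(\bar\sigma)$. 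Identifying this link at the right instant — after the earlier moves, attached to the faces $T'\supsetneq T$, have already reshaped the neighborhood of $\{v_T,a_n\}$ — and verifying it is what the induction requires, is the step that needs genuine care; once it is in place, each move just performs the expected local replacement and the induction closes. For $\dim\Delta=2$ all of this is visible directly: the edge subdivisions to perform and their order are exactly what one reads off from a spanning tree of the Hasse diagram of the face poset of $\Delta$, as in Figure~\ref{fig:backtrack-sd}.
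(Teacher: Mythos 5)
Your overall architecture matches the paper's: restrict the subdivision sequence to each closed facet, reduce to the case of a single simplex $\overline{\sigma}$, and induct on dimension; the restriction argument you sketch is sound, since the edge attached to a face $F\not\subseteq\sigma$ has its relative interior inside the relative interior of $\|F\|$ and so its subdivision does not disturb the triangulation induced on $\|\sigma\|$. But your subdivision \emph{order} is genuinely different from the paper's. The paper first subdivides along a single maximal chain, so that the barycenter of the top face appears after only a chain of smaller barycenters, and then Claim~\ref{claim:backtrack} (via its condition (ii)) certifies that the remaining moves fill in $\br$. Your order introduces $v_\sigma$ only after the facet $\overline{\tau}$ opposite $a_n$ has been fully barycentrically subdivided, and it \emph{violates} condition (ii) of that Claim: for $F_3=\{a_1,a_2,a_3,a_4\}$ the incomparable faces $\{a_1,a_2\}$ and $\{a_1,a_3\}$ both precede $F_3$ in your order. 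So you cannot lean on the paper's criterion and must supply your own verification.

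That verification is exactly what is missing. You correctly observe that placing all barycenters by honest edge subdivisions does not by itself determine the combinatorics (already for a triangle there is a second triangulation on the same seven vertices), and you correctly say that the fix is to describe the intermediate ``hybrid'' complexes and check a link condition at each coning step --- but you then declare this ``the step that needs genuine care'' and stop. That step is the entire content of the proposition. For the record, your scheme does work and the induction closes cleanly with the following explicit description: after processing all $T\subseteq\tau$ with $|T|>k$, the facets of the current complex are (a) the facets of $\br(\overline{\sigma})$ coming from maximal chains of $\overline{\sigma}$ in which $a_n$ enters at level at least $k+2$, together with (b) the hybrids $\{a_n,v_{T_1},\dots,v_{T_k},v_{T_{k+1}\cup\{a_n\}},\dots,v_{T_{n-1}\cup\{a_n\}}\}$, one for each maximal chain $T_1\subsetneq\cdots\subsetneq T_{n-1}=\tau$ of $\overline{\tau}$. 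Only the hybrids contain $a_n$; a hybrid contains $v_T$ with $|T|=k+1$ iff $T_{k+1}=T$; and subdividing $\{v_T,a_n\}$ splits each such hybrid into one facet of type (a) (where $a_n$ enters at level $k+2$) and one hybrid of the next level. Until an argument of this kind is written out, your proof is incomplete at precisely the point where the proposition has content.
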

First, we describe an algorithm for producing such a sequence, then in Lem\-ma~\ref{claim:backtrack} we prove its correctness.
Choose a maximal chain of simplices in $\Delta$, $\emptyset=F_{-1}\subseteq F_0\subseteq F_1\subseteq\cdots \subseteq F_t$, with $\dim(F_i)=i$ and $\dim(\Delta)=t$.
Denote $\{v_i\}=F_i \setminus F_{i-1}$ for $0\leq i\leq t$ and subdivide the edge $F_1$ by a new vertex $u_{01}$. Continue to subdivide the edges $\{u_{01\dots i-1},v_i\}$ by a new vertex $u_{01\dots i}$ for $1<i\leq t$.
Now backtrack by replacing $F_t$ by another $t$-simplex $F_t'= F_{t-1}\cup \{v_{t+1}\}$, if it exists, and subdivide $\{u_{01\dots t-1},v_{t+1}\}$ by $u_{01\dots t-1\,t+1}$. Keep the backtracking and edge subdivision process until a (unique) new vertex is added for each simplex in $\Delta$ of positive dimension.

This process is conveniently described as choosing a spanning tree in the Hasse diagram of the face poset of $\Delta$ by a backtracking depth first search --- the depth of a node equals its rank in the poset, and for pairs $(\emptyset\subseteq \rm{vertex})$ the edge subdivision part is empty. (For example, by this rule all edges in $\Delta$ containing the vertex $v_0$  are subdivided before the other edges in $\Delta$; this property is not important, as the next lemma will show, it just eases the description of the backtracking process.)

Figure \ref{fig:backtrack-sd} gives an illustration for the subdivision procedure in the case that $\Delta$ has
exactly two triangular facets $\{v_0,v_1,v_2\}$ and $\{v_0,v_1,v_3\}$, where, for short we write $v_0v_1v_2$ and $v_0v_1v_3$,
respectively. We first process the triangle $v_0v_1v_2$, where we proceed dimensionwise, as indicated by the indices.
We then backtrack to the edge $v_0v_1$, which is included in the second triangle $v_0v_1v_3$, and subdivide the edge $u_{01}v_3$
by inserting a vertex $u_{013}$.
Once both triangles are processed, we have to backtrack to $v_0$ and then subdivide the edges $v_0v_2$ and $v_0v_3$
by placing vertices $u_{02}$ and $u_{03}$, respectively. Next, we backtrack to $\emptyset$ and then go up again to $v_1$
to finally subdivide the edges $v_1v_2$ and $v_1v_3$ by inserting vertices $u_{12}$ and $u_{13}$, respectively.

We claim that the resulting complex equals $\br(\Delta)$, regardless of the choices made during the backtracking process.
This as a special case of the following lemma. 

\begin{lemma}\label{claim:backtrack}
Let $s$ be a sequence of stellar subdivisions starting from a simplicial complex $\Delta$, ending at $s(\Delta)$, and satisfying:
\begin{compactitem}
\item[(i)] For any face $F\in \Delta$ with $\dim(F)>0$ there is a unique vertex $v_F\in s(\Delta)$, located at the barycenter of $||F||$ (note that possibly $v_F$ is added for a stellar subdivision \emph{not} at $F$, but at a face $||G||\subseteq||F||,\ G\notin \Delta$ that has been introduced by some earlier subdivision);
and
\item[(ii)] if $F',F''\subseteq F$ are three faces in $\Delta$ of positive dimension, and if $F'$ and $F''$ are incomparable,
then $v_F$ does not appear later then both $v_{F'}$ and $v_{F''}$ in $s$.
\end{compactitem}
Then $s(\Delta)$ is combinatorially isomorphic to $\br(\Delta)$.
\end{lemma}
\begin{proof}
First, we reduce the problem to the case where $\Delta=\overline{V}:=\{F: F\subseteq V\}$ is a simplex.
For this, let $W\subseteq V$ be a subset of the vertices of a general complex $\Delta$. Then the effect of a stellar subdivision of $\Delta$ at a face $F$ on the induced complex $\Delta[W]$ is nothing if $F$ is not a subset of $W$ and equals $\stellar_{\Delta[W]}(F)$ if $F\subseteq W$. Moreover, the restriction $s_W$ of the sequence $s$ to $\Delta[W]$ satisfies conditions (i) and (ii) in the lemma. Thus, by choosing $W$ to be the vertex set of a face in $\Delta$, we see that the lemma will follow if it is true for any simplex $\overline{V}$.

Assume $\Delta=\overline{V}$ and we prove the lemma by induction on $\dim(\overline{V})$, where the case $\dim(\overline{V})\leq 1$ is trivial.
Thus, assume $\dim(\overline{V})>1$.
By the induction hypothesis and the remark above on $s_W$ (for all strict subsets $W$ of $V$), we get that the sequence $s$ changes $\partial \overline{V}$ to $\br(\partial \overline{V})$ (note that $v_V$ has no effect on the subdivision of $\partial \overline{V}$).

As the geometric realizations of $\br(\overline{V})$ and $s(\overline{V})$ give the same space, it is enough to show that any facet of $s(\overline{V})$ is also a facet of $\br(\overline{V})$. As the restriction of $s(\overline{V})$ to $||\partial \overline{V}||$ is $\br(\partial \overline{V})$, it is enough to show that
\begin{compactitem}
\item[(*)] for any initial subsequence $s'$ of $s$ that contains $v_V$, all facets of $s'(\overline{V})$ are of the form $\{v_V\}\cup F$ where $F$ is a facet of $s'(\partial \overline{V})$.
\end{compactitem}
To prove (*), notice that all vertices $v_F$ that appear before $v_V$ in $s$ correspond to pairwise comparable faces by (ii), hence these faces form a chain of faces in $\overline{V}$, say with a maximal face $F'$.

Denote by $s^{F'}$ the initial part of $s$ up to vertex $v_{F'}$, and by $s^{F'}(L)$ the restriction of $s^{F'}(\Delta)$ to $||L||$, where $L$ is a subcomplex of $\Delta$.

By induction on dimension, (*) holds for $F'$, thus all facets in $s^{F'}(\overline{F'})$ are of the form $\{v_{F'}\}\cup F''$ where $F''$ is a facet of $s^{F'}(\partial\overline{F'})$.
Also, all vertices $v_F$ appearing before $v_{F'}$ satisfy $F\subseteq F'$.
Hence, all the facets in $s^{F'}(\overline{V})$ are of the form
$\{v_{F'}\}\cup F'' \cup (V\backslash F')$ where $F''$ is a facet of $s^{F'}(\partial F')$, thus they contain the face $F'''=\{v_{F'}\}\cup (V\backslash F')$.
Note that $||\overline{F'''}||$ contains the barycenter of $||\overline{V}||$ and $F'''$ is the minimal face of $s^{F'}(\overline{V})$ with this property.
Thus, $v_V$ in $s$ corresponds to a stellar subdivision of $s^{F'}(\overline{V})$ at $F'''$, and the resulting complex $s^{V}(\overline{V})$ has the property that all its facets have the form $\{v_V\}\cup F''''$ where $F''''$ is a facet of $s^{V}(\partial \overline{V})$.
By (i), any vertex in $s$ that appears after $v_V$ corresponds to a stellar subdivision at a face $F$ contained in $||\partial \overline{V} ||$
and hence all the facets that contain $F$ also contain $v_V$,
thus all facets after the subdivision contain $v_V$ and (*) follows.
\end{proof}
\begin{proof31}
Our algorithm described above respects the conditions of Lemma~\ref{claim:backtrack}, from which correctness follows. 
\end{proof31}

\section{Stellar theory: edges suffice}\label{sec:stellar}

\begin{corollary}(Alexander, \cite[Corollary 10:2d]{Alexander1930})\label{cor:PLedge-sd}
If $\Delta$ and $\Gamma$ are PL homeomorphic simplicial complexes, then they are connected by a sequence of edge subdivisions and their inverses.
\end{corollary}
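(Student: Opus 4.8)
The plan is to reduce to a single stellar move and then use the barycentric subdivision as a common refinement. By Theorem~\ref{thm:stellar}, $\Delta$ and $\Gamma$ are joined by a finite chain of complexes in which consecutive terms differ by a stellar subdivision, so (since inverse moves are allowed) it suffices to show: for every simplicial complex $\Delta$ and every face $F\in\Delta$, the complexes $\Delta$ and $\stellar_\Delta(F)$ are connected by edge subdivisions and their inverses. If $\dim F\le 1$ this is trivial (the move is the identity or is itself an edge subdivision), so assume $\dim F\ge 2$.

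By Proposition~\ref{prop:br} there is a sequence of edge subdivisions from $\Delta$ to $\br(\Delta)$. I will show that there is likewise a sequence of edge subdivisions from $\stellar_\Delta(F)$ to $\br(\Delta)$; running the first one forward and the second one backward then connects $\Delta$ to $\stellar_\Delta(F)$ by edge subdivisions and inverses, and concatenating over the Alexander chain proves the corollary.

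To get from $\stellar_\Delta(F)$ to $\br(\Delta)$ by edge subdivisions, I apply Claim~\ref{claim:backtrack} to the sequence $s$ of stellar subdivisions of $\Delta$ whose first move is the stellar subdivision at $F$ (introducing $v_F$ at the barycenter of $||F||$) and whose remaining moves run the backtracking depth-first edge-subdivision procedure from the proof of Proposition~\ref{prop:br}, with the initial maximal chain chosen to pass through $F$ and the single step that would re-insert the barycenter of $F$ simply omitted (that vertex is already present). Every vertex introduced is the barycenter of the corresponding face of $\Delta$, so condition~(i) of Claim~\ref{claim:backtrack} holds; granting condition~(ii) and the fact that all moves after the first really are edge subdivisions, Claim~\ref{claim:backtrack} yields $s(\Delta)\cong\br(\Delta)$, and since the complex after the first move equals $\stellar_\Delta(F)$, this is exactly the desired sequence of edge subdivisions.

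The main obstacle is the bookkeeping in the previous paragraph. First, one must check that after the ``out of turn'' initial move the required edges are still available: the faces of $\Delta$ containing $F$, which are destroyed by $\stellar_\Delta(F)$, have their barycenters lying in the relative interiors of edges joining $v_F$ — or an already-inserted barycenter of a smaller face containing $F$ — to a vertex of $\lk_\Delta(F)$, thanks to the join description $v_F*\partial F*\lk_\Delta(F)$ of the star of $v_F$; hence they can still be introduced by edge subdivisions, exactly mirroring the median-edge subdivisions of Proposition~\ref{prop:br}. Second, one must verify that relocating $v_F$ to the front of $s$ does not violate condition~(ii): the only triples at risk are $F_1\subsetneq F_3\supsetneq F_2$ with $F_1=F$ and $F_2$ incomparable to $F$, but in the backtracking procedure every face strictly containing $F$ is visited after $F$ itself, so the disjunct ``$v_{F_3}$ precedes $v_F$'' was already false and (ii) still holds via its other disjunct. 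Once these two points are settled the corollary follows formally.
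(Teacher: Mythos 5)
Your argument is correct, but it takes a genuinely different route from the paper's. Both proofs reduce, via Theorem~\ref{thm:stellar}, to connecting $\Delta$ with $\stellar_\Delta(F)$ through a common refinement reached from each side by edge subdivisions alone. You take that common refinement to be the \emph{global} barycentric subdivision $\br(\Delta)$: one side is Proposition~\ref{prop:br} verbatim, the other is a re-application of Claim~\ref{claim:backtrack} to a sequence whose first move is the stellar subdivision at $F$ and whose remaining moves re-run the backtracking procedure. The paper instead builds a \emph{local} common refinement: it only replaces $\overline{F}$ by $\br(\overline{F})$ inside $\Delta$ and $\partial F$ by $\br(\partial F)$ inside $\stellar_\Delta(F)$, and identifies the two results using the fact that stellar subdivision commutes with join, $\stellar_{A*B}(\sigma)=\stellar_A(\sigma)*B$. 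The local version is shorter, stays supported on the star of $F$, and avoids all reordering bookkeeping; your version buys a single, uniform target complex. Your handling of condition~(ii) is right (with $v_F$ moved to the front, the only at-risk triples are those with $F_1=F$, and there the disjunct ``$v_{F_3}$ precedes $v_{F_2}$'' already held). The one point you should make explicit is why every move after the first is an honest edge subdivision: this needs that in the depth-first tree every coface $T$ of $F$ is discovered from a facet $T'$ with $F\subseteq T'$, for otherwise $T\setminus T'$ is a vertex of $F$, no face of $\stellar_\Delta(F)$ contains both $T'$ and that vertex, and the segment carrying the barycenter of $T$ is not an edge of the current complex. This property does hold once the initial maximal chain passes through $F$, because the search then exhausts the entire up-set of $F$ before visiting any face incomparable to $F$; it is the crux of your ``edges are still available'' step and deserves a sentence rather than an appeal to the join description alone.
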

We give a proof based on Proposition \ref{prop:br}, whose advantage we explained in the introduction.

\begin{proof}
By Theorem \ref{thm:stellar} it suffices to prove the case where $\Gamma$ is obtained from $\Delta$ by a stellar subdivision at a face $F$.

Let $s(\overline{F})$ be a sequence of edge subdivisions in the simplex $\overline{F}$, from $\overline{F}$ to $\br(\overline{F})$ as guaranteed by Proposition \ref{prop:br}. Performing $s(\overline{F})$ starting from $\Delta$ ends in a simplicial complex, denote it $\Delta'$.
Let $s(\partial \overline{F})$ be a sequence of edge subdivisions in the boundary complex $\partial \overline{F}$, from $\partial \overline{F}$ to $\br(\partial \overline{F})$ as guaranteed by Proposition \ref{prop:br}. Performing $s(\partial \overline{F})$ starting from $\Gamma$ ends in a simplicial complex, denote it $\Gamma'$.

To finish the proof we show that $\Delta'\cong \Gamma'$ (or equality, with the obvious identifications of vertices given by geometric location at barycenters --- which we will use below).
Considering the effect of a stellar subdivision on geometric realizations, with each (closed) face $F'$ of the original complex there is a canonically associated closed ball consisting of a subcomplex in the resulting complex, whose underlying space is $||\overline{F'}||$.
The face $F'\in \Delta$ has a unique decomposition $F'=F'_+\cup F'_-$ such that $F'_+\subseteq F$ and $F'_-\cap F=\emptyset$.

Then, as stellar subdivision and join commute (namely for disjoint simplicial complexes $\Delta_I,\Delta_{II}$ and a face
$F_I\in \Delta_I$, $\stellar_{\Delta_I*\Delta_{II}}(F_I)=\stellar_{\Delta_I}(F_I)*\Delta_{II}$),
we get that for $F'\in \Delta$ the complex associated with $||\overline{F'}||$ in $\Delta'$ is $\overline{F'_-}* \br(\overline{F'_+})$. If $F'_+\neq F$, then $F'\in \Gamma$ and again $\overline{F'_-}* \br(\overline{F'_+})$ is the corresponding subcomplex in $\Gamma'$.
If $F'_+=F$, denote by $v_F$ the vertex in the relative interior of $||\overline{F}||$ in the geometric realizations of
both (by the abuse of notation explained above)
$\Delta'$ and $\Gamma$ (and $\Gamma'$).
Then the subcomplex corresponding to $||\overline{F'}||$ is as follows: in $\Delta'$ it is $\overline{F'_-}* \br(\overline{F})=\overline{F'_-}*\{v_F\}*\br(\partial \overline{F})$; in $\Gamma$ it is $\overline{F'_-}*\{v_F\}*\partial \overline{F}$, hence in $\Gamma'$ it is $\overline{F'_-}*\{v_F\}*\br(\partial \overline{F})$.
\end{proof}

\section{Flag complexes: edges suffice}\label{sec:flag}
Recall that  a \emph{missing face} of a simplicial complex $\Delta$ is a subset $F$ of vertices of a $\Delta$
satisfying $F\notin\Delta$ and $\partial \overline{F}\subseteq \Delta$, and that $\Delta$ 
is \emph{flag} if all its missing faces have cardinality two.

We now describe an invariant to measure how `close' some simplicial complex is to a flag complex.
Define
$$d(\Delta):=\sum_{\ F\notin\Delta,\partial \overline{F}\subseteq \Delta, |F|>2}  |F|, $$
thus $\Delta$ is flag if and only if $d(\Delta)=0$.
The following observation will be important.

\begin{lemma}\label{lem:d(Delta)}
Let $\Delta'$ be obtained from a simplicial complex $\Delta$ by an edge subdivision, and that edge is contained in a missing face of $\Delta$ of dimension at least $2$. Then $d(\Delta')<d(\Delta)$.
\end{lemma}
\begin{proof}
Let $\{a,b\}$ be the edge subdivided, by a new vertex $v$.
The missing faces of $\Delta'$ are obtained from the missing faces of $\Delta$ as follows: 
if (the disjoint union) $F\uplus\{a,b\}$ is missing in $\Delta$ replace it by $F\cup\{v\}$
(of smaller size), the other missing faces of $\Delta$ are missing also in $\Delta'$, and the rest of the missing faces of $\Delta'$ are of the form $\{v,u\}$ for some vertex~$u$.

As missing edges do not effect $d(\cdot)$, and $\Delta$ has a missing face of the form $F\uplus\{a,b\}$ with $F$ nonempty, we have $d(\Delta')<d(\Delta)$.
\end{proof}
The argument above on missing faces also verifies that
\begin{lemma}\label{lem:edge-sd(flag)}
Let $\Delta'$ be obtained from a simplicial complex $\Delta$ by an edge subdivision.
If the edge subdivided is in no missing face, then $d(\Delta')=d(\Delta)$. In particular, if $\Delta$ is flag, then
$\Delta'$ is flag.
$\square$
\end{lemma}

\setcounter{section}{1}
\setcounter{theorem}{1}

\begin{maintheorem}
Two flag simplicial complexes $\Delta$ and $\Gamma$ are PL homeomorphic if and only if they can be connected by a sequence of edge subdivisions and their inverses such that all the complexes in the sequence are flag.
\end{maintheorem}\setcounter{section}{5}\setcounter{theorem}{0}
\begin{proof} The `if' part is obvious. As for the `only if' part, by Corollary~\ref{cor:PLedge-sd}, there is a sequence $\alpha$ of simplicial complexes $\Delta=\Delta_0, \Delta_1,\dots,\Delta_t=\Gamma$ such that for each $1\leq i\leq t$, one of $\Delta_i$ and $\Delta_{i-1}$ is obtained from the other by an edge subdivision.
However, not all complexes in $\alpha$ are flag. We now show how to modify $\alpha$ to a new sequence from $\Delta$ to $\Gamma$ where each $\Delta_i$ is flag. The modification is done in steps, where at each step the invariant $d(\cdot)$ is improved, until a sequence of flag complexes is obtained.

For a sequence $\alpha$ as above let $\max(\alpha):=\max_{0\leq i\leq t}d(\Delta_i)$.
In the case $\max(\alpha)>0$ let  $\rm{mult}(\alpha)$ be the number of $i$'s for which $d(\Delta_i)=\max(\alpha)$, and define ${\bf d}(\alpha):=(\max(\alpha),\rm{mult}(\alpha))$.
Equip $\mathbb{N}^2$ ($\mathbb{N}=\{1,2,3,\dots\}$) with the lexicographic order, namely $(a,b)<(c,d)$ if and only if either $a<c$ or $a=c$ and $b<d$,
and append to it a new element $\hat{0}$, smaller then all, to get a linear order $P$ with a minimum~$\hat{0}$.
Define ${\bf d}(\alpha)=\hat{0}$ if $\max(\alpha)=0$.
Thus, $\alpha$ is a sequence as required if and only if ${\bf d}(\alpha)=\hat{0}$.

Next, we modify the sequence $\alpha$. Assume ${\bf d}(\alpha)>\hat{0}$, as else we are done.
Call index $i$ a \emph{valley} of $\alpha$ ($0<i<t$) if each of $\Delta_{i-1}$ and $\Delta_{i+1}$ is obtained from $\Delta_{i}$ by an edge subdivision.
As both $\Delta_{0}$ and $\Delta_{t}$ are flag, by the assumption ${\bf d}(\alpha)>\hat{0}$ and Lemma~\ref{lem:edge-sd(flag)}, $\alpha$ has a valley. By Lemma \ref{lem:d(Delta)}, $\alpha$ has a valley $i$
such that $d(\Delta_i)=\max(\alpha)$. Consider such $i$, and let $e_1$ (resp. $e_2$) be the edge of $\Delta_i$ subdivided to obtain $\Delta_{i-1}$ (resp.\ $\Delta_{i+1}$). Without loss of generality, we assume $e_1\neq e_2$, since otherwise $\Delta_i$ along with either $\Delta_{i-1}$ or $\Delta_{i+1}$ can be cancelled from the sequence.

As ${\bf d}(\alpha)>\hat{0}$, there exists a missing face in $\Delta_i$ of dimension at least $2$, and let $e$ be an edge contained in it. Denote by $\Delta'$ the complex obtained from $\Delta_i$ by subdividing at $e$. In the sequence $\alpha$ replace
$\Delta_{i}$ by three consecutive complexes $(\Delta_i,\Delta',\Delta_i)$  to obtain a sequence $\alpha'$.
The sequence $\alpha'$ thus contains $(\Delta_{i-1},\Delta_i,\Delta',\Delta_i,\Delta_{i+1})$.
Since $e_1\neq e_2$, we have that $e\neq e_1$ or $e\neq e_2$.
We first consider the non-degenerated case with $e_1\neq e\neq e_2$.
\begin{figure}
    \begin{center}
      \begin{postscript}
      \psfrag{e1}{$e_1$}
      \psfrag{e}{$e$}
      \psfrag{a}{\mbox{}\hspace{2mm}$\longrightarrow$}
      \psfrag{T}{$T$ in $\Delta_{i-1}$}
      \includegraphics[width=.125\linewidth]{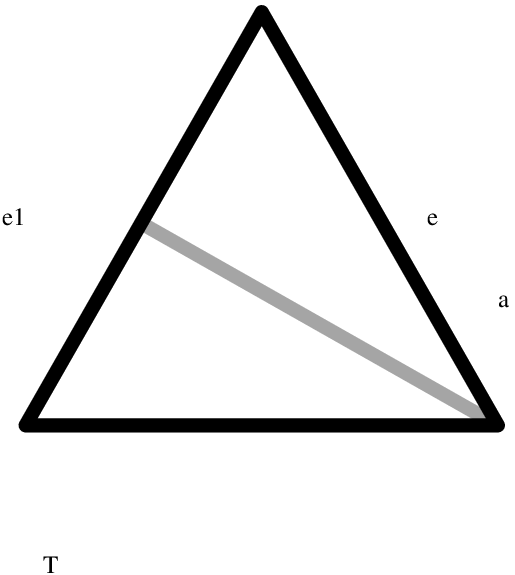}\hspace{9mm}
      \psfrag{e1}{$e_1$}
      \psfrag{e}{$e$}
      \psfrag{a}{\mbox{}\hspace{2mm}$\longrightarrow$}
      \psfrag{T}{$T$ in $\Delta'_1$}
      \includegraphics[width=.125\linewidth]{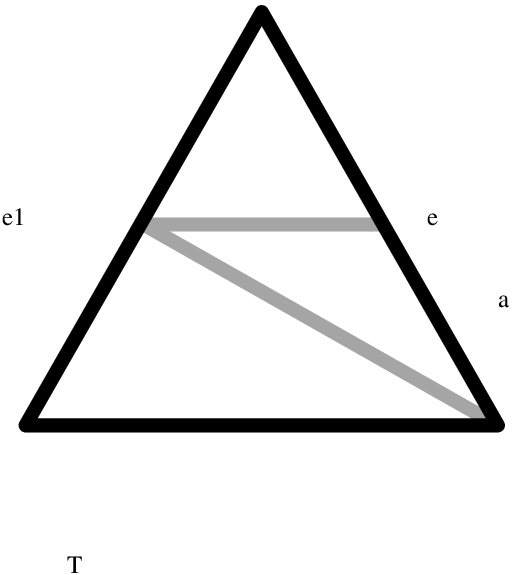}\hspace{9mm}
      \psfrag{e1}{$e_1$}
      \psfrag{e}{$e$}
      \psfrag{a}{\mbox{}\hspace{2mm}$\longrightarrow$}
      \psfrag{T}{$T$ in $\Delta'_2$}
      \includegraphics[width=.125\linewidth]{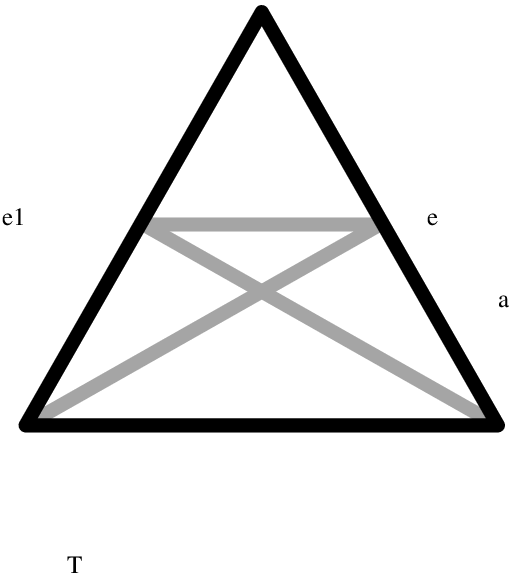}\hspace{9mm}
      \psfrag{e1}{$e_1$}
      \psfrag{e}{$e$}
      \psfrag{a}{\mbox{}\hspace{2mm}$\longrightarrow$}
      \psfrag{T}{$T$ in $\Delta'_3$}
      \includegraphics[width=.125\linewidth]{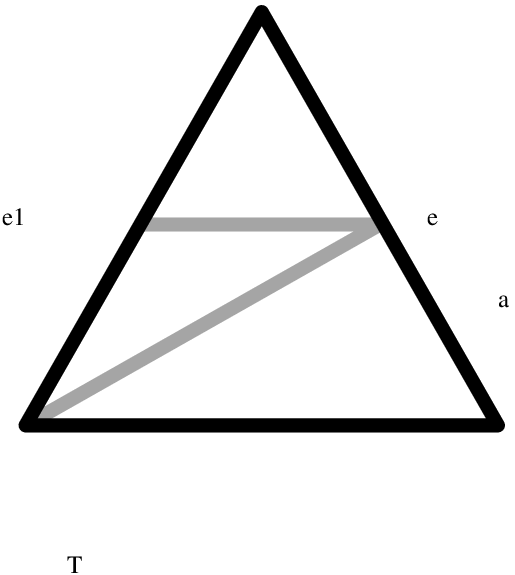}\hspace{9mm}
      \psfrag{e1}{$e_1$}
      \psfrag{e}{$e$}
      \psfrag{T}{$T$ in $\Delta'_4=\Delta'$}
      \includegraphics[width=.125\linewidth]{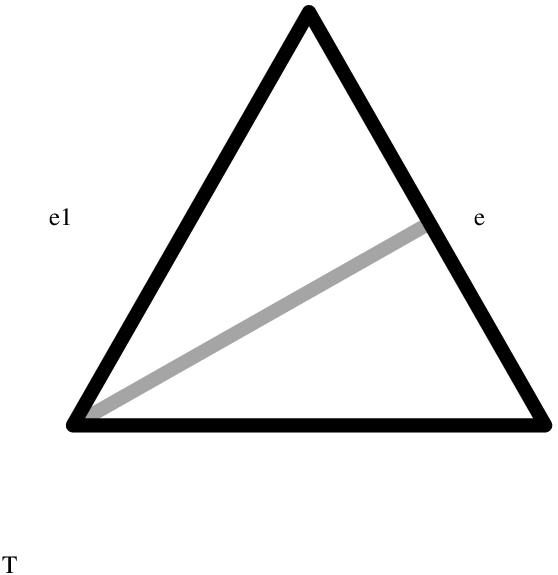}
      \end{postscript}
    \end{center}
    \caption{Sequence of five complexes $(\Delta_{i-1},\Delta'_1,\Delta'_2,\Delta'_3,\Delta'_4=\Delta')$.
      \label{fig:sequence-5-complexes}}
  \end{figure}

Case 1: $e$ and $e_1$ are not contained in a common $2$-face of $\Delta_i$.
Then the two subdivisions, at $e_1$ and at $e$, commute (e.g.\ \cite[Corollary 10:2a]{Alexander1930}). Replace in $\alpha'$ the part $(\Delta_{i-1},\Delta_i,\Delta')$ by the one obtained by commuting the subdivisions,
$(\Delta_{i-1},\Delta'',\Delta')$, and note that $d(\Delta'')\leq d(\Delta')<d(\Delta_i)$ by Lemmas \ref{lem:d(Delta)} and \ref{lem:edge-sd(flag)}.

Case 2: otherwise, $e$ and $e_1$ are in a (unique) $2$-face $T$, and replace in $\alpha'$ the part $(\Delta_{i-1},\Delta_i,\Delta')$ by
a sequence of five complexes $(\Delta_{i-1},\Delta'_1,\Delta'_2,\Delta'_3,\Delta'_4=\Delta')$ as induced by
the subdivisions of $T$ illustrated in Figure~\ref{fig:sequence-5-complexes},
see also \cite[Figure 1A]{DaSilva-Karu}. Note that $d(\Delta'_j)<d(\Delta_i)$ for $1\leq j\leq 4$ as each $\Delta'_j$ is obtained from $\Delta_i$ by a sequence of edge subdivisions that include $e$.

For the part $(\Delta',\Delta_i,\Delta_{i+1})$ of $\alpha'$
we do a similar replacement; resulting in a sequence $\alpha''$ from $\Delta$ to $\Gamma$ with ${\bf d}(\alpha'')<{\bf d}(\alpha)$.

In the degenerated cases $e_1=e\neq e_2$ and $e_1\neq e= e_2$ we first cancel the degenerated part $(\Delta_i,\Delta')$ and ($\Delta',\Delta_i$) from the sequence and then either execute Case 1 or Case 2 on the non-degenerated part, respectively.

Thus, after repeating the replacement process finitely many times we arrive at a sequence $\beta$ with ${\bf d}(\beta)=\hat{0}$, as desired.
\end{proof}

\section{Extremal flag spheres}\label{sec:apps}
Barnette's lower bound theorem for simplicial polytopes and spheres \cite{Barnette73:Graphtheoremsformanifolds,Barnette:LBT-73} follows from the inequality on face numbers of the $1$-skeleton for all simplicial spheres:
$$g_2:=f_1-df_0+\binom{d+1}{2} \geq 0,$$
where $d-1$ is the dimension of the sphere, and $f_i$ the number of $i$-dimensional faces. This reduction is known as McMullen--Perles--Walkup reduction (MPW).
Barnette proved that equality is attained if and only if the simplicial polytope is \emph{stacked}, and Kalai extended this result to all \emph{homology spheres} \cite{Kalai-LBT}.

Stronger lower bounds for the case where the homology spheres are \emph{flag} were conjectured in \cite[Conjecture 1.4]{Nevo-Missing}, and a reduction similar to MPW was shown \cite[Proposition 3.2]{Nevo-Missing} to the following inequality, for all flag homology $(d-1)$-spheres (same notation as above):
$$\gamma_2:=f_1-(2d-3)f_0+2d(d-2)\geq 0.$$
This inequality is part of Gal's conjecture that the entire 
\emph{$\gamma$-vector} $(\gamma_0,\gamma_1,\ldots,\gamma_{\lfloor\frac{d}{2}\rfloor})$ of flag homology $(d-1)$-spheres is nonnegative \cite{Gal},
where the $\gamma$-vector is defined by the polynomial equation 
$\sum_{i=0}^{\lfloor\frac{d}{2}\rfloor}\gamma_i t^i(t+1)^{d-2i}=(t-1)^d \sum_{i=0}^d f_{i-1}(\frac{1}{t-1})^i$.
Here, we will conjecture a characterization of the flag homology spheres with $\gamma_2=0$.

The conjecture below should be thought of as describing the flag analogues of stacked spheres. Further, to prove $\gamma_2\geq 0$ it is enough to consider flag spheres were each edge belongs to an induced $4$-cycle
(i.e., the subgraph induced by the vertices of the cycle is exactly the $4$-cycle). Call these triangulations \emph{minimal};  see the proof of Corollary~\ref{cor:cumputer_program} for details.
The conjecture below suggests that for $d>3$ minimal flag $(d-1)$-spheres, different from the octahedral sphere, must have $\gamma_2>0$.

\begin{conjecture}\label{conj:gamma_2=0}
Let $d\geq 4$ be an integer and $\Delta$ be a flag simplicial $(d-1)$-sphere. Then the following are equivalent:
\begin{compactitem}
\item[(i)] $\gamma_2(\Delta)=0$.
\item[(ii)] There is a sequence of edge contractions from $\Delta$ to the boundary of the $d$-dimensional cross polytope, i.e., to the octahedral $(d-1)$-sphere, such that all complexes in the sequence are flag spheres, and the link of each edge contracted is the octahedral $(d-3)$-sphere.
\end{compactitem}
\end{conjecture}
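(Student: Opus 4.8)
The plan is to prove the equivalence as two implications, obtaining the final ``moreover'' clause along the way. The engine for (ii)$\Rightarrow$(i) is a short face count. If $\Gamma$ is obtained from a flag $(d-1)$-sphere $\Sigma$ by contracting an edge $e=\{a,b\}$ and $\Gamma$ is again a simplicial complex, then --- using that in a flag complex the vertex set of $\lk_{\Sigma}(e)$ is exactly the set of common neighbours of $a$ and $b$ --- one computes $f_0(\Gamma)=f_0(\Sigma)-1$ and $f_1(\Gamma)=f_1(\Sigma)-1-f_0(\lk_{\Sigma}(e))$, hence
$$\gamma_2(\Gamma)-\gamma_2(\Sigma)=2(d-2)-f_0(\lk_{\Sigma}(e)).$$
Since $\lk_{\Sigma}(e)$ is a flag $(d-3)$-sphere it has at least $2(d-2)$ vertices, with equality exactly when it is the octahedral $(d-3)$-sphere (the cross-polytope boundaries being the unique vertex-minimal flag spheres, equivalently $\gamma_1\ge0$ for flag spheres with equality only there). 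So a contraction along an octahedral-link edge preserves $\gamma_2$, and any other simpliciality-preserving contraction strictly decreases it. Now take a sequence of flag edge contractions from $\Delta$ to the octahedral $(d-1)$-sphere, which has $\gamma_2=0$: if every contracted edge has octahedral link then $\gamma_2$ is constant along the sequence and $\gamma_2(\Delta)=0$, which is (i); and conversely, if $\gamma_2(\Delta)=0$, then $\gamma_2$ is $0$ at both ends and non-increasing, hence constant, so every contracted edge has octahedral link --- this is the ``moreover''.

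For (i)$\Rightarrow$(ii) I would induct on $f_0(\Delta)$, the base case being $\Delta$ itself octahedral. The inductive step reduces to the following. \emph{Main Lemma: a flag $(d-1)$-sphere $\Delta$ with $\gamma_2(\Delta)=0$ that is not the octahedral sphere has an edge $e=\{a,b\}$ with $\lk_{\Delta}(e)$ the octahedral $(d-3)$-sphere and with $\Delta/e$ flag} --- equivalently, writing $A=N_\Delta(a)\setminus(N_\Delta(b)\cup\{b\})$ and $B=N_\Delta(b)\setminus(N_\Delta(a)\cup\{a\})$, no edge of $\Delta$ joins $A$ to $B$. Granting the Main Lemma, $\Delta/e$ is automatically a flag $(d-1)$-sphere: for an edge of a flag complex the link condition $\lk(a)\cap\lk(b)=\lk(\{a,b\})$ holds automatically, so the contraction preserves the PL homeomorphism type; and by the face count above $\gamma_2(\Delta/e)=0$ while $f_0(\Delta/e)<f_0(\Delta)$, so the inductive hypothesis applies and produces the required sequence.

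To prove the Main Lemma I would first try to locate a vertex $v$ whose link is the octahedral $(d-2)$-sphere, i.e.\ $\deg(v)=2(d-1)$, exploiting $\gamma_2(\Delta)=0$ together with $\deg(u)\ge 2(d-1)$ for all $u$ (each vertex link is a flag $(d-2)$-sphere), the identities $\sum_u f_0(\lk u)=2f_1$ and $\sum_u f_1(\lk u)=3f_2$, Dehn--Sommerville, and $\gamma$-nonnegativity of the vertex links. At such a $v$ every incident edge $\{v,x\}$ already has octahedral link, since $\lk_{\Delta}(\{v,x\})=\lk_{\lk_{\Delta}(v)}(x)=\partial\diamond^{d-2}$; it then remains to choose $x$ so that $\Delta/\{v,x\}$ stays flag. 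Because $N_\Delta(v)=V(\lk_{\Delta}(v))$, the only neighbour of $v$ not adjacent to $x$ is the antipode $x'$ of $x$ in the octahedron $\lk_{\Delta}(v)$, so flagness of $\Delta/\{v,x\}$ says exactly that $x$ and $x'$ have no common neighbour outside the closed star of $v$. The crux is to show that some antipodal pair $\{x,x'\}$ of $\lk_{\Delta}(v)$ has this property --- ruling out that for \emph{every} such pair there is a vertex $w\not\sim v$ adjacent to both $x$ and $x'$, which would produce an induced $4$-cycle $v\,x\,w\,x'$ through $v$. This is where I expect the real difficulty: it seems to require a genuinely global consequence of $\gamma_2=0$, most plausibly a flag analogue of Kalai's rigidity proof of the lower bound theorem in which $\gamma_2$ is realized as the dimension of a space of stresses for a suitable framework or chain complex attached to $\Delta$, so that $\gamma_2=0$ pins the $1$-skeleton down to the admissible vertex splittings (such a framework would also reprove $\gamma_2\ge0$ for flag spheres). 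A secondary worry is that a $\gamma_2=0$ flag sphere might have no octahedral-link \emph{vertex} at all, forcing one instead to extract the needed octahedral-link edge from an iterated vertex link; organizing this uniformly is the other delicate point.
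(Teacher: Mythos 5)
This statement is a \emph{conjecture} in the paper, not a theorem: the authors prove only the implication (ii)$\Rightarrow$(i) together with the ``moreover'' clause, and they support (i)$\Rightarrow$(ii) solely by computer experiments. Your treatment of the easy direction is correct and is essentially the paper's own remark: your face count $\gamma_2(\Sigma)-\gamma_2(\Sigma/e)=f_0(\lk_\Sigma(e))-2(d-2)$ is exactly the identity $\gamma_2(K)=\gamma_2(K/e)+\gamma_1(\lk_K(e))$, and combining it with the Gal--Meshulam fact that $\gamma_1\geq 0$ for flag spheres with equality only for the octahedral sphere gives both (ii)$\Rightarrow$(i) and the ``moreover'' statement by the same telescoping/monotonicity argument the authors indicate.

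The genuine gap is the direction (i)$\Rightarrow$(ii), and it is not a repairable local flaw: your ``Main Lemma'' (every non-octahedral flag $(d-1)$-sphere with $\gamma_2=0$ admits an edge with octahedral link whose contraction is again flag) \emph{is} the open content of the conjecture, and nothing in your proposal establishes it. The two steps you sketch both fail as written. First, you have no argument that a $\gamma_2=0$ flag sphere contains a vertex of degree $2(d-1)$; the double-counting identities and $\gamma$-nonnegativity of links that you list do not obviously force this, and you flag this yourself as a ``secondary worry.'' Second, even granting such a vertex $v$, you must exclude that every antipodal pair $\{x,x'\}$ in $\lk(v)$ lies in an induced $4$-cycle through a vertex outside the star of $v$; you correctly identify that this needs a global structural consequence of $\gamma_2=0$, and the rigidity-theoretic framework you invoke (a flag analogue of Kalai's stress-space proof of the lower bound theorem) does not currently exist --- indeed, such a framework would reprove $\gamma_2\geq 0$ for flag spheres, which is itself only a theorem in low dimensions and part of Gal's conjecture in general. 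So the proposal proves the same half that the paper proves, by the same method, and leaves the conjectural half conjectural.
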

Part (ii) is the flag analog of stackedness: indeed, it is not difficult to see that a simplicial $(d-1)$-sphere $\Delta$ is stacked if and only if there is a sequence of edge contractions from $\Delta$ to the boundary of the $d$-simplex such that all complexes in the sequence are simplicial spheres, and the link of each edge contracted is the boundary of a $(d-2)$-simplex.

We remark that the implication $(ii)\Rightarrow (i)$ is easy: recall $\gamma_1:=f_0-2d$, then
for an edge contraction yielding $\Delta'=\Delta/e$ one has $\gamma_2(\Delta)=\gamma_2(\Delta')+\gamma_1(\lk_{\Delta}(e))$. As shown in \cite{Gal, Meshulam}, 
$\gamma_1\geq 0$ for all flag $(d-1)$-spheres, and  
the only flag spheres for which $\gamma_1$ vanishes are octahedral.

Thus, assuming $\gamma_2(\Delta)=0$ and existence of a sequence of edge contractions from $\Delta$ to the octahedral $(d-1)$-sphere with all complexes in the sequence flag spheres, implies that the links of the edges contracted must be octahedral spheres.

Conjecture \ref{conj:gamma_2=0} holds for the interesting subclass of (dual complexes of) \emph{flag nestohedra}, as Volodin \cite{Volodin}  showed they can be obtained from the octahedral sphere by a sequence of edge subdivisions.


To test the implication $(i)\Rightarrow (ii)$ in Conjecture \ref{conj:gamma_2=0}, as well as Gal's conjecture $\gamma_2\geq 0$, we run the following computer program.
\begin{enumerate}
\item{} Start with the octahedral $(d-1)$-sphere ($d\geq 4$),
\item{} perform at random (for some number of rounds) either an edge subdivision or a contraction of an edge which is in \emph{no} induced $4$-cycle (we call such contractions \emph{admissible}),
\item{} check if $\gamma_2\geq 0$ and
\item{} once $\gamma_2=0$ is reached, perform admissible edge contractions only as long as possible and check if the resulting flag sphere is the octahedral sphere.
\item{} repeat: go back to (2).
\end{enumerate}

\begin{corollary}\label{cor:cumputer_program}
Fix $d\geq 4$. Our computer program searches exactly through the entire space of $(d-1)$-dimensional flag PL spheres.
\end{corollary}
\begin{proof}
First note that the condition on admissible edge contractions guarantees that all the complexes obtained are flag.
This is well known to experts. As we could not find a reference, here is a proof.

Indeed, for an admissible contraction of edge $\{a,b\}$ in a flag complex $\Delta$, to a new vertex $v$, the resulting complex
\begin{eqnarray*}
\Delta' & \!:=\! & \{F\in\Delta: a,b\notin F\} \\
             &     & \cup\,\, \{F\cup\{v\}:\ F\cap \{a,b\}=\emptyset\  \rm{and\ either}\  F\cup\{a\}\in \Delta\ \rm{or} \ F\cup\{b\}\in \Delta\}
\end{eqnarray*}
has no missing faces of dimension larger than $1$.  First of all, $\Delta' $ has no missing triangles, since
otherwise if $F\uplus\{v\}$ is a missing triangle in $\Delta'$ with $|F|=2$, then $F\in\Delta$, but $a$ and $b$ can not be neighbors in $\Delta$ of both vertices of $F$, from which it follows that the edge $\{a,b\}$ is in an induced $4$-cycle, which was excluded.

Thus, suppose that $F\in \Delta$, $|F|>2$ and $F\uplus\{v\}$ is a missing face in $\Delta'$. We will show that one of $a,b$ is a neighbor of all vertices of $F$ in the $1$-skeleton of $\Delta$, which implies, as $\Delta$ is flag, that
$F\uplus\{v\}\in\Delta'$, a contradiction. If $b$ is not a neighbor of some $u'\in F$ then as $\partial(\overline{F\uplus\{v\}})\subseteq \Delta'$ we conclude that for any $u'\neq u \in F$, $(F\setminus\{u\})\cup\{a\}\in \Delta$.
As $|F|>2$ we get that $a$ is a neighbor of all elements of $F$ in $\Delta$ and thus $F\uplus\{a\}\in\Delta$, hence $F\uplus\{v\}\in\Delta'$, a contradiction.
We conclude that $\Delta'$ is flag.

In particular, the edges contracted satisfy the link-condition $$\lk(\{a,b\})=\lk(a)\cap\lk(b),$$ thus the contractions preserve the PL type of the sphere \cite{Nevo-VK}; clearly the (stellar) edge subdivisions preserve the PL type as well.
Note that the inverse of an edge subdivision on flag complexes is a special case of an admissible edge contraction. Thus,
Theorem~\ref{thm:main} finishes the proof.
\end{proof}

We now turn to a conjecture on the extremal examples for upper bounds.
Let $T(r,n)$ be the complete $r$-partite graph on $n$ vertices with the parts as equal size as possible. Tur\'{a}n showed that this graph has more edges than any other graph on $n$ vertices without an $(r+1)$-clique. The number of $i$ cliques in $T(r,n)$, denoted $f_{i-1}(r,n)$, can be easily computed and is roughly $~\binom{r}{i}(\frac{n}{r})^i$.

In \cite[Conjecture 6.3]{Nevo-Petersen} it was conjectured that for any flag homology sphere $\Delta$, $\gamma(\Delta)$ is the $f$-vector of some balanced complex. In particular, from the characterization of such $f$-vectors \cite{FFK} it would follow that if $\Delta$ is $(d-1)$-dimensional with $n$ vertices then
$$\gamma_i(\Delta)\leq f_{i-1}(\lfloor\frac{d}{2}\rfloor, n-2d) $$ for all $2\leq i\leq \lfloor\frac{d}{2}\rfloor$ (equality for $i=0,1$ is clear).
What can be said about the case of equality?

\begin{conjecture}\label{conj:flagUBC}
Let $d\geq 4$ be even and $\Delta$ be a flag simplicial $(d-1)$-sphere on $n$ vertices.
Then the following are equivalent:
\begin{compactitem}
\item[(i)] $\gamma_i(\Delta)=
f_{i-1}(\frac{d}{2}, n-2d)$ for some $2\leq i\leq \frac{d}{2}$.
\item[(ii)] $\Delta$ is the join of\, $\frac{d}{2}$ cycles of as equal length as possible.
\end{compactitem}
\end{conjecture}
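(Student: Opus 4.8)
The plan is to separate the two implications: $(ii)\Rightarrow(i)$ is a routine computation, while $(i)\Rightarrow(ii)$ is the substantive and, I expect, genuinely hard direction.

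For $(ii)\Rightarrow(i)$, write $\Delta=C_{n_1}\ast\cdots\ast C_{n_{d/2}}$ with $n_1+\cdots+n_{d/2}=n$ and the $n_j$ as equal as possible; since a cycle is flag only when it has length $\ge4$, all $n_j\ge4$. The $\gamma$-polynomial is multiplicative under join (it is obtained from the $h$-polynomial, which is multiplicative under join, via $h(t)=\sum_i\gamma_i t^i(1+t)^{d-2i}$), and a direct computation gives $\gamma(C_m,t)=1+(m-4)t$. Hence $\gamma(\Delta,t)=\prod_{j=1}^{d/2}\bigl(1+(n_j-4)t\bigr)$, so $\gamma_i(\Delta)=e_i(n_1-4,\dots,n_{d/2}-4)$, the $i$-th elementary symmetric function. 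Since $\sum_j(n_j-4)=n-2d$ and the nonnegative integers $n_j-4$ are as equal as possible, they are precisely the part sizes of the balanced partition of $n-2d$ into $d/2$ parts, and $e_i$ of those parts counts the $i$-cliques of the Tur\'an graph $T(d/2,n-2d)$, i.e.\ equals $f_{i-1}(d/2,n-2d)$. Thus equality in fact holds for \emph{every} $i$, which en route re-proves that the conjectured bound $\gamma_i\le f_{i-1}(d/2,n-2d)$ is attained.

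For $(i)\Rightarrow(ii)$ I would work in two stages. Stage one, \emph{from one equality to product form}: use the (currently conjectural) chain that by \cite[Conjecture 6.3]{Nevo-Petersen} the vector $\gamma(\Delta)$ is the $f$-vector of a balanced, i.e.\ $(d/2)$-colorable, complex on $\gamma_1=n-2d$ vertices, and that by the Frankl--F\"uredi--Kalai theory \cite{FFK} such a complex has at most $f_{i-1}(d/2,n-2d)$ faces of dimension $i-1$, with equality forcing (via colored Kruskal--Katona compression) the witnessing complex to be the balanced complete $(d/2)$-partite complex. Equality in (i) for one $2\le i\le d/2$ should therefore pin down $\gamma(\Delta,t)=\prod_j(1+m_jt)$ with the $m_j$ the balanced partition of $n-2d$. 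The honest status is that this stage rests on open conjectures; to make it unconditional one would restrict to the cases where Gal's conjecture and \cite[Conjecture 6.3]{Nevo-Petersen} are known -- for instance $d=4$, where the only coordinate is $\gamma_2$ and (i) is exactly the assertion that $\Delta$ has the maximum $f_1$ among flag $3$-spheres on $n$ vertices -- or prove the needed instance of the inequality and its equality case directly.

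Stage two is where I expect the \emph{main obstacle}: \emph{recovering the join structure of $\Delta$ from the product form of $\gamma(\Delta)$}. Knowing $\gamma(\Delta,t)=\prod_j(1+m_jt)$ does not determine $\Delta$, so a genuine rigidity argument is needed, which I would attempt by induction on $d$ via links. For a vertex $v$ the link $\lk_\Delta(v)$ is a flag $(d-2)$-sphere, and for the target complex it is $S^0\ast C_{n_2}\ast\cdots\ast C_{n_{d/2}}$; using the edge-contraction identity $\gamma_2(K)=\gamma_2(K/e)+\gamma_1(\lk_K(e))$ together with the fact (recalled after Conjecture \ref{conj:gamma_2=0}) that $\gamma_1$ of a flag sphere vanishes exactly on octahedral spheres, one hopes to show that in the equality case an edge contraction stays inside the extremal family and strips off an $S^0$-factor, eventually reducing to the cross-polytope boundary; reversing the process should reconstruct the cycle factors. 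The Main Theorem enters twice here: it guarantees that the computer search underlying Corollary \ref{cor:cumputer_program} is exhaustive over flag PL-spheres (the source of evidence for the conjecture), and it plausibly provides the inductive scaffolding, since it lets one move between flag spheres by controlled local moves. The difficulty is precisely that the $\gamma$-vector is far from a complete invariant: one must upgrade the numerical equality to strong combinatorial constraints on the $1$-skeleton -- every vertex of the correct degree, no forbidden $K_4$'s, the graph being of complete-multipartite-join type -- and then certify a genuine simplicial join; and even the base case $d=4$ is the open flag upper bound problem for edges of $3$-spheres. A complete proof will therefore need an unconditional proof of the relevant $\gamma$-inequality and its equality case, plus a from-scratch rigidity theorem for flag spheres with extremal $1$-skeleton.
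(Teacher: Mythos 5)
The statement you are proving is a \emph{conjecture}; the paper offers no proof of it, only supporting evidence (computer experiments via Corollary \ref{cor:cumputer_program}, the remark that for $d=4$ it would follow from Gal's Conjecture 3.2.2, and the partial confirmation of Adamaszek--Hladky for $d=4$ with $\gamma_1$ large). So there is nothing in the paper to compare your argument against, and your proposal should be judged on its own terms. Your verification of $(ii)\Rightarrow(i)$ is correct and complete: $\gamma$ is multiplicative under join (since $h$ is and the $\gamma$-transform is compatible with adding dimensions), $\gamma(C_m,t)=1+(m-4)t$, and $e_i$ of the balanced partition of $n-2d$ into $d/2$ parts is exactly the clique count $f_{i-1}(d/2,n-2d)$; this direction, which the paper leaves implicit, you have actually proved, and it shows equality holds for all $i$ simultaneously.

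For $(i)\Rightarrow(ii)$ your proposal is a research program, not a proof, and you say so -- which is the honest assessment, since this direction is open. Two concrete gaps beyond the acknowledged reliance on open conjectures: first, even granting that $\gamma(\Delta)$ is the $f$-vector of a balanced complex, equality at a \emph{single} index $i$ plus colored Kruskal--Katona rigidity would at best pin down the coordinates $\gamma_j$ for $j\le i$ (compression forces lower face numbers up to the Tur\'an values, but says nothing about higher ones), so your Stage one does not yet ``pin down'' the full product form of $\gamma(\Delta,t)$ as claimed. Second, and more seriously, Stage two asks for a rigidity theorem upgrading a numerical $\gamma$-equality to the join structure of $\Delta$; the edge-contraction identity $\gamma_2(K)=\gamma_2(K/e)+\gamma_1(\lk_K(e))$ and the characterization of $\gamma_1=0$ are the right tools in spirit (they are exactly what the paper uses for the easy direction of Conjecture \ref{conj:gamma_2=0}), but no mechanism is given for producing a contractible edge whose contraction stays in the extremal family, and the base case $d=4$ is itself the open flag upper bound problem. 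In short: one direction proved, the other correctly identified as open with a plausible but incomplete strategy.
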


Clearly (ii) implies (i); further, among joins of $\frac{d}{2}$ cycles with a total of $n$ vertices, the join where the cycles are as equal length as possible is the unique maximizer of each of $\gamma_i$ for $2 \leq i\leq \frac{d}{2}$.

We remark that this conjecture is in contrast to the usual upper bound theorem for simplicial polytopes (McMullen \cite{McMullen:MaximumNumberFacesConvexPolytope-70}) and spheres (Stanley \cite{Stanley:CohenMacaulayUBC-75}), where equality is attained by numerous examples, namely by all neighborly polytopes and spheres.
For $d=4$, Conjecture~\ref{conj:flagUBC} follows
from a conjecture of Gal \cite[Conjecture 3.2.2]{Gal}. Very recently, the case $d=4$ of the conjecture was confirmed when $\gamma_1$ is large enough~\cite{Adamaszek-Hladky}, compare also \cite[Conjecture 5.1]{Adamaszek-Hladky}.

Our computer experiments support Conjecture \ref{conj:gamma_2=0} as well as Conjecture~\ref{conj:flagUBC}.
For our search, we used a variation of the bistellar flip program \texttt{BISTELLAR} \cite{BjoernerLutz2000,Lutz_BISTELLAR},
where we replaced the standard bistellar flips by edge subdivisions and admissible edge contractions.

\medskip

\textbf{Acknowledgments}:
We thank Micha\l\ Adamaszek, Karim Adiprasito, Gil Kalai, Ilya Tyomkin, and Vadim Volodin
for helpful discussions and remarks.
We are also indebted to the anonymous referee for valuable comments with respect to the
presentation of the paper.

\bibliography{.}

\end{document}